\newtheorem{theorem}{Theorem}[section]
\newtheorem{lemma}[theorem]{Lemma}
\newtheorem{proposition}[theorem]{Proposition}
\newtheorem{proposition*}{Proposition}
\theoremstyle{definition}
\theoremstyle{remark}
\begin{document}

\title[A construction of the free skew-brace]{A construction of the free skew-brace}

\author{Juan Orza}
\address{Departamento de Matem\'atica\\ Facultad de Ciencias Exactas y Naturales-UBA, Pabell\'on~1-Ciudad Universitaria\\ Intendente Guiraldes 2160 (C1428EGA) Buenos Aires, Argentina.}
\email{}

\begin{abstract}
The aim of this paper is to give a simple construction of the free skew-brace over a set $X$.
\end{abstract}

\maketitle

\section*{Introduction}

The concept of skew-brace was introduced by Guarnieri and Vendramin \cite{GV} in order to study non-involutive set theoretical solutions of de braid equation using method of non-commutative ring theory. For the original definition we refer to this paper. In \cite{V} the author ask for a construction of the skew-brace over a set $X$. The free skew-brace over $X$ exist by general result of universal algebra, but the constructions of free objects valid in the context of universal algebra are very involved. So, in this note we present a simple construction of the free skew-brace using the following result established in \cite{R}:

\medskip

\noindent \textbf{Proposition.}\enspace A group $(A,\circ)$, endowed with a pair of left actions $b\mapsto a\cdot b$ y $b\mapsto a:b$ on $A$ (as a set), is a skew-brace if and only if the following equality holds:
\begin{equation}
	(a\cdot b)\circ a = (b:a)\circ b \quad \text{for all $a, b \in A$.}\label{eq}
\end{equation}

\section{A construction of the free skew-brace over $X$}

To begin with, we recursively define a set $Y$ in the following way: first we set
\[
Y_1 \coloneqq X\sqcup X^*,
\]
where $X^*\coloneqq \{x^*:x\in X\}$ is a disjoint copy of the elements of $X$. Next we define the sets
\begin{align*}
	X_n^{\cdot } &\coloneqq \{a\cdot b \ | \ a,b\in Y_{n-1}, b\neq a^* \cdot  c \mbox{ for any } c\in Y_{n-1}  \}\\
	X_n^{:} &\coloneqq \{a:b \ | \ a,b\in Y_{n-1}, b\neq a^* : c \mbox{ for any } c\in Y_{n-1}  \}\\	
	Y_n &\coloneqq Y_{n-1} \sqcup (X_n^{\cdot } \sqcup X_n^{:}) \sqcup (X_n^{\cdot } \sqcup X_n^{:})^*
\end{align*}
where $a\cdot b$ and $a:b$ are formal elements. Then we set $Y \coloneqq \bigcup_{n\geq 1} Y_n$ and we let $G$ denotes the free group $F(Y)$ over $Y$ (with underlying set the collection of reduced words, with the element $x^*$ playing the roll of the inverse of $x$). In the sequel, we are going to denote the multiplication of $G$ by juxtaposition $a\circ b = ab$.

The next step is to define maps $\cdot$ and $:$ from $Y\times Y$ to $Y$ by
\[
a\cdot b = \left\{
\begin{array}{lr}
	a\cdot b & \text{ if }b\neq a^*\cdot c,\\
	c & \text{ if }b=a^*\cdot c,
\end{array}
\right.
\]
\[
a:b = \left\{
\begin{array}{lr}
a:b & \text{ if }b\neq a^*:c,\\
c & \text{ if }b=a^*:c.
\end{array}
\right.
\]

We want to extend both $\cdot$ and $:$ to left actions of $G$ on $G$ in such away that $(a\cdot b)a = (b:a)b$ for all $a,b\in G$, which happen if and only if
\begin{align}
	a\cdot b &= (b:a)ba^{-1}\quad\text{for all $a,b\in G$,} \label{a}
\shortintertext{or, equivalently,}
	b:a &= (a\cdot b)ab^{-1}\quad\text{for all $a,b\in G$.} \label{b}
\end{align}
Taking $b=1$ in this equalities  we see that necessarily $a\cdot 1 = a:1 = 1$ for all $a\in G$. Let $a,b,c \in G$. From (\ref{b}) it follows that
\[
\begin{array}{lrl}
	 & (ab) : c &= a:(b:c) \\
\Leftrightarrow & (c\cdot (ab))c (ab)^{-1} &= a : ((c\cdot b)cb^{-1})\\
\Leftrightarrow & (c\cdot (ab))cb^{-1}a^{-1} &= (((c\cdot b)cb^{-1})\cdot a)(c\cdot b)cb^{-1}a^{-1}\\
\Leftrightarrow & c\cdot (ab) &= ((b:c)\cdot a)(c\cdot b).
\end{array}
\]
Similarly, from (\ref{a}) it follows that:
\[
(ab) \cdot c = a\cdot (b\cdot c) \Leftrightarrow c:(ab) = ((b\cdot c): a)(c\cdot b).
\]
This motivates to extend both $\cdot$ and $:$ to maps from $Y\times G$ to $G$, by defining
\begin{align}
	x \cdot  (gy) &\coloneqq ((y:x)\cdot g)(x\cdot y), \label{def.}\\
	x : (gy) &\coloneqq ((y\cdot x):g)(x:y), \label{def:}
\end{align}
for each $x,y\in X$ and each reduced word $gy$, and then to extend these definitions to left actions of $G$ on $G$, by defining
\begin{align*}
	(x_1x_2\dots x_n) \cdot  g &= x_1 \cdot  (x_2\cdot (\dots (x_n\cdot g)\dots)),\\
	(x_1x_2\dots x_n) : g &= x_1 : (x_2:(\dots (x_n:g)\dots)),\\
	1 \cdot  g &= 1:g = g,
\end{align*}
for all $x_i \in Y$ and $g\in G$.

Now we check that
$$
x\cdot (ab) = ((b:x)\cdot a)(x\cdot b)\quad\text{for all $x\in Y$ and $a,b\in G$.}
$$
We proceed by induction on the length of $b$. This is clear for $b=1$. Assume that it is true for $b\in G$, and let us see that it is true for $by$ with $y\in Y$:
	\begin{align*}
		x\cdot (a(by)) &= ((y:x)\cdot (ab))(x\cdot y)\\
		&= ((b:(y:x))\cdot a)((y:x)\cdot b)(x\cdot y)\\
		&= (((by):x)\cdot a)(x\cdot (by))
	\end{align*}
In a similar way it is proved that $x:(ab)=((b\cdot x):a)(x:b)$.

\begin{lemma}\label{compatibilidad de cdot y : con el producto} For all $a,b,c\in G$, the identities
\begin{equation}
a\cdot (bc) = ((c:a)\cdot b)(a\cdot c)\quad\text{and}\quad a : (bc) = ((c\cdot a): b)(a : c)\label{ref: compatibilidad de cdot y : con el producto}
\end{equation}
are fulfilled.
\end{lemma}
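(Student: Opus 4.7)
Since the two identities have just been established when $a$ is a single letter $x \in Y$, I would extend them to arbitrary $a \in G = F(Y)$ by induction on the length of $a$ as a reduced word, proving both identities of the lemma simultaneously. The base case $a = 1$ is immediate from $1\cdot g = 1:g = g$ together with $g\cdot 1 = g:1 = 1$.

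\textbf{Inductive step.} Write $a = xa'$ with $x \in Y$ and $a'$ strictly shorter. Unfolding the iterated action gives $a\cdot(bc) = x\cdot(a'\cdot(bc))$; the inductive hypothesis rewrites the inner expression as $((c:a')\cdot b)(a'\cdot c)$, and applying the pre-lemma identity for $x \in Y$ to this product in $G$ yields
\[
(((a'\cdot c):x)\cdot((c:a')\cdot b))\,(x\cdot(a'\cdot c)).
\]
The second factor equals $(xa')\cdot c = a\cdot c$. To rewrite the first factor, I would invoke the compatibility $u\cdot(v\cdot b) = (uv)\cdot b$ of the iterated action with multiplication in $G$, obtaining $(((a'\cdot c):x)(c:a'))\cdot b$, and then reduce to the auxiliary identity
\[
c : (xa') = ((a'\cdot c):x)(c:a'),
\]
which is the $:$-statement of the lemma specialized to middle argument $x \in Y$. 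I would prove this auxiliary by a secondary induction on the length of $c$: writing $c = zc'$ with $z \in Y$ and applying the pre-lemma identity for $:$ to the leading letter $z$, the inductive step reduces via the outer inductive hypothesis (the $\cdot$-identity applied to the shorter word $a'$, used to expand $a'\cdot(zc')$) to the desired equality. The inductive step for the $:$-statement of the lemma is symmetric: it reduces to the auxiliary $c\cdot(xa') = ((a':c)\cdot x)(c\cdot a')$, which is handled by the analogous secondary induction and consumes the outer inductive hypothesis for the $:$-identity at shorter length.

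\textbf{Main obstacle.} The delicate point is organizing the two inductions so that every appeal to the hypothesis is at strictly lower complexity: the main inductive step for each identity invokes an auxiliary whose proof in turn consumes the \emph{other} identity at a shorter length of $a$, so the two chains must be interleaved. A subsidiary technical ingredient is the compatibility $u\cdot(v\cdot g) = (uv)\cdot g$ of the iterated action with reductions in $F(Y)$, which the cancellation clauses built into the definitions of $\cdot$ and $:$ on $Y\times Y$ are designed to enforce; once granted, the two calculations sketched above close the induction.
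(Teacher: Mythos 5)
Your proof is correct, but it organizes the double induction differently from the paper. The paper runs its two inductions sequentially: it first establishes the $\cdot$-identity in the special case where the rightmost argument is a single letter, $a\cdot(bx)=((x:a)\cdot b)(a\cdot x)$ for $x\in Y$, by induction on the length of the actor $a$ --- and because $c=x$ is a letter, the cross-term appearing there, $x:(ya)$, is still an instance of the pre-lemma (a single letter acting on a product of two elements of $G$), so that induction closes with no extra machinery. It then removes the restriction on $c$ by a second, separate induction on the length of $c$. You instead induct on the length of the actor $a$ with $b,c$ arbitrary from the start; the price is that your cross-term $c:(xa')$ has a general element acting, which the pre-lemma does not cover, forcing the nested secondary induction on the length of $c$ for the auxiliary identity (the other identity specialized to a one-letter middle argument). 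Both schemes are sound: your appeals to hypotheses are all at strictly smaller $|a|$, or at the same $a'$ but inside a secondary induction on $|c|$, so the interleaving you worry about in your last paragraph does terminate. The paper's ordering simply avoids the nesting and is a bit shorter. One shared debt: both arguments use $u\cdot(v\cdot g)=(uv)\cdot g$ for arbitrary $u,v\in G$, including when the product $uv$ reduces in $F(Y)$; the paper invokes this silently, while you rightly isolate it as the ingredient the cancellation clauses in the definition of $\cdot$ and $:$ on $Y\times Y$ are designed to secure --- though it still deserves its own short induction (e.g.\ $y\cdot(y^{*}\cdot g)=g$ for $y\in Y$), which neither proof writes out.
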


\begin{proof} We prove the first equality and left the second one, which is similar, to the reader. Clearly the first equality in~\eqref{ref: compatibilidad de cdot y : con el producto} is true if $a=1$ or $c=1$. Take $c=x \in Y$ and suppose that the first equality in~\eqref{ref: compatibilidad de cdot y : con el producto} holds for $a\in G$. Then,
	\begin{align*}
		(ya)\cdot (bx) &= y\cdot (a\cdot (bx))\\
		&=y\cdot (((x:a)\cdot b)(a\cdot x))\\
		&=(((a\cdot x):y)\cdot ((x:a)\cdot b))(y\cdot (a\cdot x))\\
		&=(((a\cdot x):y)(x:a)\cdot b)((ya)\cdot x)\\
		&=((x:(ya))\cdot b)((ya)\cdot x)
	\end{align*}
Let $a,b,c\in G$, $z\in X$, and suppose that the first equality in~\eqref{ref: compatibilidad de cdot y : con el producto} holds $c\in G$. We have
	\begin{align*}
		a\cdot (b(cz)) &= ((z:a)\cdot (bc))(a\cdot z)\\
		&=((c:(z:a))\cdot b)((z:a)\cdot c)(a\cdot z)\\
		&=((cz : a)\cdot b)(a\cdot (cz)).
	\end{align*}
which proves that the first equality in~\eqref{ref: compatibilidad de cdot y : con el producto} also holds for $cz$.
\end{proof}

It is not true that $(a\cdot b)a=(b:a)b$ for all $a,b\in G$. So, in order to finish our task we need to take a quotient $G/\sim$ of $G$ by a suitable congruence relation.

\begin{proposition}\label{(a cdot b)a=(b:a)b sobre generadores}
Let $A$ be a group endowed with a pair of left actions $b\mapsto a\cdot b$ y $b\mapsto a:b$ on $A$ (as a set) such that
\begin{equation*}
a\cdot (bc) = ((c:a)\cdot b)(a\cdot c)\quad\text{and}\quad a : (bc) = ((c\cdot a): b)(a : c)\quad\text{for all $a,b,\in A$,}
\end{equation*}
and let $X$ be a set of generators of $A$ as a monoid. If $(x\cdot y)x=(y:x)y$ for all $x,y\in X$, then $(a\cdot b)a=(b:a)b$ for all $a,b\in A$.
\end{proposition}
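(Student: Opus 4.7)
The plan is to introduce $T \coloneqq \{b \in A : (a \cdot b)\, a = (b : a)\, b \text{ for all } a \in A\}$ and show that $T = A$. Since $X$ generates $A$ as a monoid, it suffices to verify that $T$ is a submonoid of $A$ containing $X$.

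To show $T$ is a submonoid, I would first derive $a \cdot 1 = a : 1 = 1$ by specializing $c = 1$ in the compatibility identities of Lemma~\ref{compatibilidad de cdot y : con el producto} and using $1 \cdot a = a$ and $1 : a = a$; this gives $1 \in T$. For closure under the product, take $b_1, b_2 \in T$ and $a \in A$: expand $(a \cdot (b_1 b_2))\, a = ((b_2 : a) \cdot b_1)(a \cdot b_2)\, a$ by Lemma~\ref{compatibilidad de cdot y : con el producto}, substitute $(a \cdot b_2)\, a = (b_2 : a)\, b_2$ from $b_2 \in T$, apply the defining identity of $b_1 \in T$ at the element $b_2 : a$ to rewrite $((b_2 : a) \cdot b_1)(b_2 : a) = (b_1 : (b_2 : a))\, b_1$, and collapse $b_1 : (b_2 : a) = (b_1 b_2) : a$ by the left-action axiom. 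The outcome $((b_1 b_2) : a)\, b_1 b_2$ is the defining identity at $b_1 b_2$, so $b_1 b_2 \in T$.

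For $X \subseteq T$, I would fix $y \in X$ and prove $(a \cdot y)\, a = (y : a)\, y$ for every $a \in A$ by induction on the length $|a|$ of $a$ as a word in $X$. The cases $|a| \leq 1$ are trivial or the hypothesis. For $|a| = m \geq 2$, writing $a = a_1 a_2$ with $|a_1|, |a_2| < m$ and expanding both sides via the left-action axiom and Lemma~\ref{compatibilidad de cdot y : con el producto}, the inductive hypothesis at $a_2$ (replacing $(a_2 \cdot y)\, a_2$ by $(y : a_2)\, y$) reduces the goal to the identity $(a_1 \cdot c)\, a_1 = (c : a_1)\, c$ at the pair $(a_1, c)$ with $c = a_2 \cdot y$. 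The hard part is that $c$ may be arbitrarily long as a word in $X$, so this reduced identity is not directly an instance of the inner inductive hypothesis on $|a|$. I would resolve it by leaning on the submonoid structure of $T$ from the previous step: factoring $c = z_1 \cdots z_p$ with $z_i \in X$ and iterating Lemma~\ref{compatibilidad de cdot y : con el producto} expresses the identity at $(a_1, c)$ as a combination of identities at pairs $(a', z_i)$ with $z_i \in X$. The whole argument therefore takes the form of a double induction — an outer induction on $|b|$, so that $T$ accumulates progressively through its submonoid closure, and an inner induction on $|a|$ for the base case $|b| = 1$ — organized so that every invoked instance of the target identity is already in hand at the current stage.
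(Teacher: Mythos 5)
Your first two steps are correct and are, if anything, cleaner than the paper's: the derivation of $a\cdot 1=a:1=1$ and the closure of $T=\{b\in A: (a\cdot b)a=(b:a)b\ \text{for all } a\in A\}$ under products both work, precisely because membership in $T$ quantifies over \emph{all} first arguments, so the instance of the target identity at the non-generator $b_2:a$ is legitimately available. (The paper runs the transposed version of this computation, accumulating the set of first arguments $a$ for which the identity holds against all $b$.)

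The gap is in the step $X\subseteq T$, and your own description of the intended fix is where it breaks. Write $P(a,b)$ for the identity $(a\cdot b)a=(b:a)b$. Your inner induction on $|a|$ correctly reduces $P(a_1a_2,y)$ to $P(a_1,\,a_2\cdot y)$, and you propose to discharge the latter by factoring $a_2\cdot y=z_1\cdots z_p$ and iterating the compatibility identity. But that iteration requires, besides $P(a_1,z_p)$, the instances $P(z_p:a_1,\,z_{p-1})$, $P((z_{p-1}z_p):a_1,\,z_{p-2})$, and so on: their second arguments are indeed in $X$, but their \emph{first} arguments are elements of the form $c':a_1$, and the length of $c':a_1$ as a word in $X$ is in no way bounded by $|a_1|$ or $|a|$ (nothing in the hypotheses prevents the actions from increasing $X$-length arbitrarily). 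These instances are therefore not ``already in hand at the current stage'' of the inner induction, and the double induction does not terminate. Be aware that the paper's own proof contains the mirror image of exactly this gap: in its first display, the passage from $((y:x)\cdot b)(x\cdot y)x$ to $(b:(y:x))by$ uses the target identity at the first argument $y:x$, which need not lie in $X$ and is not covered by its induction hypothesis. Both arguments do close under the extra assumption that $X$ is stable under the operations $\cdot$ and $:$ (then $z\cdot y\in X$, and your inner induction can peel one generator at a time via $P(az,y)\Leftrightarrow P(a,z\cdot y)$); that stability holds for the generating set $Y$ of the paper's free construction, but it is not part of the proposition as stated, so a further idea is needed to prove the statement in this generality.
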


\begin{proof}
Suppose that $(x\cdot b)x = (b:x)b$ for some $b\in G$ and all $x,y\in X$. Then, for all $x,y\in X$,
\begin{align*}
	(x\cdot (by))x &= ((y:x)\cdot b)(x\cdot y)x\\
	&= (b:(y:x))by\\
	&=((by):x)by.
\end{align*}
So, $(x\cdot b)x=(b:x)b$ for all $x\in X$ and $b\in G$. Let $a\in G$. Suppose that $(a\cdot b)a=(b:a)b$ for all $b\in A$. Then
\begin{align*}
	((ax)\cdot b)ax &= (a\cdot (x\cdot b))ax \\
	&= ((x\cdot b):a)(x\cdot b)x\\
	&= ((x\cdot b):a)(b:x)b\\
	&= (b:(ax))b,
\end{align*}
which finishes the proof.
\end{proof}

\noindent Let $\sim$ be the minimal equivalence relation in $A$ such that:
\begin{enumerate}

\item $(x\cdot y)x \sim (y:x)y$ for all $x,y\in X$, and

\item If $a\sim b$ and $c\sim d$, then
\begin{align}
	\qquad\quad ac &\sim bd,\\
	a\cdot c &\sim b\cdot d,\\
	a: c &\sim b: d.
\end{align}

\end{enumerate}

\noindent By Proposition~\ref{(a cdot b)a=(b:a)b sobre generadores} and the proposition in the introduction, $G/\sim$ is a skew brace.

\begin{theorem}
The group $G/\sim$, endowed with the binary operations $\cdot$ and $:$ induced by the corresponding operations in $G$, is the free skew brace over $X$.
\end{theorem}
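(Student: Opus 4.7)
The plan is to verify the universal property of the free skew-brace: for any skew-brace $B$ (with operations written $\circ$, $\cdot$, $:$ as in the proposition of the introduction) and any map $f\colon X\to B$, we must produce a unique skew-brace morphism $\bar f\colon G/\!\sim\,\to B$ extending $f$. The construction follows the layered definition of $G/\!\sim$: first extend $f$ to $\tilde f\colon Y\to B$ by recursion on $n$. On $Y_1$, put $\tilde f(x)\coloneqq f(x)$ and $\tilde f(x^*)\coloneqq f(x)^{-1}$; on $Y_n$, for each formal symbol set $\tilde f(a\cdot b)\coloneqq \tilde f(a)\cdot \tilde f(b)$ and $\tilde f(a:b)\coloneqq \tilde f(a):\tilde f(b)$, and extend by $\circ$-inverses to the starred counterparts. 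By the universal property of the free group, $\tilde f$ then extends uniquely to a group morphism $\hat f\colon G\to(B,\circ)$.

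The next step is to show that $\hat f$ intertwines the operations $\cdot$ and $:$ on $G$ with the corresponding ones on $B$; that is, $\hat f(a\cdot b)=\hat f(a)\cdot\hat f(b)$ and $\hat f(a:b)=\hat f(a):\hat f(b)$ for all $a,b\in G$. For $a,b\in Y$ there are two cases: if $b\neq a^*\cdot c$ for any $c\in Y$, the symbol $a\cdot b$ was already handled in the previous step by construction; if $b=a^*\cdot c$, the simplification rule $a\cdot b=c$ on the $G$-side is matched on the $B$-side by $\hat f(a)\cdot(\hat f(a)^{-1}\cdot\hat f(c))=\hat f(c)$, which holds because $\cdot$ is a left action of $(B,\circ)$. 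From $a,b\in Y$ one bootstraps to arbitrary $a,b\in G$ by induction on word length, using the recursive formulas \eqref{def.} and \eqref{def:} on the $G$-side and the analogous identities of Lemma~\ref{compatibilidad de cdot y : con el producto} (which, as consequences of the skew-brace axioms, hold also in $B$) on the $B$-side.

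Once $\hat f$ is known to respect $\circ$, $\cdot$ and $:$, we pass to the quotient. Since $B$ is a skew-brace, the equation $(x\cdot y)\circ x=(y:x)\circ y$ holds in $B$ for every $x,y\in X$, so $\hat f$ identifies the two sides of each generating relation of $\sim$. Combined with its compatibility with all three operations, this shows $\hat f$ is constant on $\sim$-classes and therefore factors through a well-defined skew-brace morphism $\bar f\colon G/\!\sim\,\to B$. Uniqueness is immediate: any extension of $f$ is forced on $Y$ by the recursive definitions of $\cdot$ and $:$ (together with the simplification rule), forced on $G$ by being a group morphism, and forced on $G/\!\sim$ by surjectivity of the canonical projection.

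The main obstacle is the verification in the second step that $\hat f$ commutes with the two actions despite the branching definitions of $\cdot$ and $:$ on $Y\times Y$ and on $Y\times G$. The interplay between the formal symbols of $X_n^{\cdot}$ and $X_n^{:}$, their starred counterparts, and the simplification rule $a\cdot(a^*\cdot c)=c$ has to be tracked carefully across the inductive steps; the construction of $Y$ was designed precisely so that each admissible formal symbol corresponds to an unambiguous value in any skew-brace target, which is what makes the induction go through.
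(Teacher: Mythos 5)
Your proposal is correct and follows essentially the same route as the paper: extend $f$ recursively over the layers $Y_n$, pass to a group morphism on the free group $G$, check compatibility with $\cdot$ and $:$, and factor through the congruence $\sim$ using that equality~\eqref{eq} holds in the target. In fact you spell out two steps the paper leaves implicit — that the induced morphism intertwines the actions on all of $G\times G$ (via the identities of Lemma~\ref{compatibilidad de cdot y : con el producto}, which hold in any skew brace) and that it respects the simplification rule $a\cdot(a^*\cdot c)=c$ — so your sketch is, if anything, more complete than the paper's.
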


\begin{proof}
Let $A$ be a skew brace and let $f\colon X\to A$ be a map. Next, we extend $f$ to a group morphism $\widetilde{f}\colon G\to A$ such that
$$
\widetilde{f}(x\cdot y) = f(x)\cdot f(y)\quad\text{and}\quad \widetilde{f}(x : y) = f(x): f(y)\quad\text{for all $x,y\in Y$.}
$$
For this, first we extend $f$ to $Y$ as follows:

\begin{itemize}

\item[-] We extend $f$ to $Y_1$ by defining $\widetilde{f}(x^*)\coloneqq f(x)^{-1}$ for each $x\in X$;

\item[-] Assuming that we have defined $\widetilde{f}$ on $Y_{n-1}$, we extend $\widetilde{f}$ to $Y_n$ by defining $\widetilde{f}(x\cdot y) \coloneqq f(x)\cdot f(y)$ and $\widetilde{f}(x : y)\coloneqq f(x): f(y)$, for all $x,y\in Y_{n-1}$.

\end{itemize}
Then, we extend $f$ to $G$ by defining
$$
\widetilde{f}(x_1x_2\dots x_n) \coloneqq  \widetilde{f}(x_1)\widetilde{f}(x_2)\cdots\widetilde{f}(x_n)\quad\text{for $x_i\in Y$.}
$$
Since equality~\eqref{eq} is satisfied in~$A$, the map $\widetilde{f}$ induce a skew brace morphism $\varphi\colon G/\sim\to A$, unique such that $f=\varphi\circ i$, where $i\colon X\to G/\sim$ is the canonical map.
\end{proof}

\begin{bibdiv}
\begin{biblist}

\bib{GV}{article}{
 Author = {Guarnieri, L.},
 Author = {Vendramin, L.},
 Title = {Skew braces and the Yang-Baxter equation},
 Journal = {Math. Comput.},
 ISSN = {0025-5718; 1088-6842/e},
 Volume = {86},
 Number = {307},
 Pages = {2519--2534},
 Year = {2017},
 Publisher = {American Mathematical Society (AMS), Providence, RI},
 Language = {English},
}

\bib{R}{article}{
 Author = {Rump, W.},
 Title = {A covering theory for non-involutive set-theoretic solutions to the Yang-Baxter equation},
 Journal = {J. Algebra},
 ISSN = {0021-8693},
 Volume = {520},
 Pages = {136--170},
 Year = {2019},
 Publisher = {Elsevier (Academic Press), San Diego, CA},
 Language = {English},
}

\bib{V}{article}{
 Author = {Vendramin, L.},
 Title = {Problems on skew left braces},
 Journal = {Adv. Group Theory Appl.},
 ISSN = {2499-1287},
 Volume = {7},
 Pages = {15--37},
 Year = {2019},
 Publisher = {Aracne Editrice S.r.L., Roma},
 Language = {English},
}

\end{biblist}
\end{bibdiv}

\end{document}